\documentclass[11pt]{amsart}
\usepackage[utf8x]{inputenc}
\usepackage{amsthm,amssymb, amsmath, url} 
\usepackage{graphicx}
\usepackage[all]{xy}

\newtheorem{theorem}{Theorem}

\newtheorem{lemma}[theorem]{Lemma}

\title[Singularity of harmonic measure]{Singularity of harmonic measure for finitely supported random walks}
\author{Petr Kosenko} 
\email{pkosenko@math.ubc.ca}

\author{Giulio Tiozzo} 
\email{tiozzo@math.utoronto.ca, giulio.tiozzo@uniroma1.it}
\date{September 16, 2026}

\begin{document}

\begin{abstract}
In this paper we affirmatively resolve the singularity conjecture for finitely supported non-degenerate random walks on cocompact Fuchsian groups. The method we use is based on constructing a pair of geodesic currents, one for the Lebesgue measure and one for the random walk measure, and checking that they are different by using Fourier analysis on the boundary.
\end{abstract}

\maketitle
\section{Introduction}
In this paper we affirmatively resolve the singularity conjecture for finitely supported non-degenerate random walks on cocompact Fuchsian groups.
\begin{theorem}
	\label{main theorem}
Let $\Gamma < \textup{PSL}_2(\mathbb{R})$ be a cocompact Fuchsian group, and $\mu$ a finitely supported probability measure on $\Gamma$ 
such that the semigroup generated by its support is $\Gamma$.
Then the hitting measure $\nu$ for the random walk driven by $\mu$ is singular with respect to Lebesgue measure on $\partial \mathbb{H}$. 
\end{theorem}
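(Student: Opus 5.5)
Suppose, for contradiction, that $\nu$ is not singular with respect to Lebesgue measure $\lambda$. The plan is to upgrade this to equivalence, encode both measures by $\Gamma$-invariant geodesic currents, and then contradict the equality of these currents by Fourier analysis on the circle.

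\textbf{Step 1: from non-singular to equivalent.} Since $\nu$ is $\mu$-stationary and $\Gamma$ acts ergodically on $(\partial\mathbb{H},\lambda)$, the Lebesgue decomposition of $\nu$ with respect to $\lambda$ is preserved by the action. Hence if $\nu$ is not singular, its absolutely continuous part is a positive stationary measure. Uniqueness of the stationary measure then forces $\nu \ll \lambda$. The density is $\Gamma$-quasi-invariant, and by ergodicity it is positive almost everywhere, so $\nu\sim\lambda$.

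\textbf{Step 2: two geodesic currents.} For Lebesgue measure we take the Liouville current $L = \frac{d\xi\,d\eta}{|\xi-\eta|^2}$ on $\partial^2\mathbb{H}$. For the random walk we take the reflected measure $\check\nu$, driven by $\check\mu(g)=\mu(g^{-1})$. The product $\nu\otimes\check\nu$ has a density with respect to $\lambda\otimes\lambda$ of the form $e^{2\langle\xi,\eta\rangle_o}$, built from a Green-metric Gromov product. This gives a $\Gamma$-invariant current $C = e^{2(\xi|\eta)_{G}}\,d\nu\,d\check\nu$, and the results of Kaimanovich and of Blachère–Haïssinsky–Mathieu on the Green metric apply here.

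\textbf{Step 3: the currents coincide.} If $\nu\sim\lambda$, then $\check\nu\sim\lambda$ as well, since the entropy and drift equalities $h=\ell v$ are symmetric. Ergodicity of the $\Gamma$-action on $\partial^2\mathbb{H}$ with respect to $L$ (ergodicity of the geodesic flow) then shows $C=cL$. Equivalently, the Green metric $d_G$ is roughly proportional to the hyperbolic metric, and the translation lengths satisfy $\ell_G(g)=c\,\ell_{\mathbb{H}}(g)$ for every $g\in\Gamma$.

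\textbf{Step 4: contradiction from finite support.} I expect this to be the main obstacle. The plan is to exploit finite support through the Fourier expansion of the density $\varphi=d\nu/d\lambda$ on $\partial\mathbb{H}=S^1$.

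Stationarity reads $\varphi=\sum_g \mu(g)\,(\varphi\circ g^{-1})\,|(g^{-1})'|$. Since $\mu$ has finite support, this is a finite linear combination of Möbius pullbacks. Together with $C=cL$, the density $\varphi$ must be expressible through the Poisson kernel: $\varphi(\xi)$ is essentially $e^{c\,\beta_\xi}$ along orbits, so the Martin kernel equals a power of the Poisson kernel.

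Fixing the base point $o=0$ in the disc model and taking Fourier coefficients gives an identity of the form $\sum_g \mu(g)P_{g o}(\theta)^{s}=P_o^{s}$ with $s\ne 1$ or some constant. One then compares the growth of the Fourier coefficients: the coefficients of $P_z^s$ are hypergeometric in $|z|$. A finite sum of such terms can match the required relation only if the exponents align in a way that forces the support of $\mu$ to be infinite, or forces $\mu$ to be invariant under a rotation group, which contradicts discreteness.

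Concretely, I would look at the highest Fourier modes, where the decay rates $|z_g|^n$ are determined by the finitely many points $z_g=g\cdot 0$. The largest modulus dominates, and isolating it yields a contradiction. This step is where the real analytic work lies.
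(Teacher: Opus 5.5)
\textbf{Genuine gap in Step 4.} Your Steps 1--3 are essentially the paper's reduction: $\nu$ and $\check\nu$ are absolutely continuous, and ergodicity of the geodesic flow gives Na\"im current $=c\cdot$Liouville. There is some garbling in Step 2: it is the Na\"im kernel $\Theta$ that makes $\Theta\,d\check\nu\,d\nu$ invariant, not a density of $\nu\otimes\check\nu$ with respect to $\lambda\otimes\lambda$. The gap is Step 4, where the whole difficulty lies.

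Your claim that $C=cL$ makes the Martin kernel a power of the Poisson kernel is unjustified, and false in general. Write $\nu=\varphi\lambda$ with $\lambda$ the visual measure from $o$. Then directly
\[
K_\mu(x,\xi)=\frac{d x_*\nu}{d\nu}(\xi)=\frac{\varphi(x^{-1}\xi)}{\varphi(\xi)}\,P(xo,\xi).
\]
By contrast, $C=cL$ only says that $\Theta(\xi,\eta)\check\varphi(\xi)\varphi(\eta)$ is a constant multiple of the Liouville density, and nothing forces $\varphi$ to be constant. An exact identity $K_\mu(x,\cdot)=P(xo,\cdot)^s$ would give $\varphi(x^{-1}\xi)/\varphi(\xi)=P(xo,\xi)^{s-1}$. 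Since $\varphi$ is bounded, this is impossible unless $s=1$, and then $\varphi$ is $\Gamma$-invariant, so $\nu=\lambda$ exactly.

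In that special case your Fourier idea does work. The identity $\sum_g\mu(g)P(go,\cdot)\equiv 1$ gives $\sum_g\mu(g)z_g^n=0$ for all $n\ge1$, hence $\sum_g\mu(g)(1-z_gu)^{-1}\equiv 1$ as rational functions. This forces $go=o$ on the support, which contradicts generation. But it only shows that a visual measure is never exactly $\mu$-stationary, which is much weaker than the theorem.

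For a general density, the Fourier coefficients of $(\varphi\circ g^{-1})\,|(g^{-1})'|$ involve all Fourier coefficients of the unknown $\varphi$. So there is no dominant ``largest modulus'' term to isolate. Finite support then enters only as ``the stationarity equation is a finite sum''. That does not separate your setting from infinitely supported Furstenberg-type measures, for which $\nu\sim\lambda$ really occurs.

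\textbf{The missing idea.} You need a way for finite support to constrain the current itself, not the stationarity equation. The paper does this with bounded jumps. Take a loxodromic $h$ with axis $\gamma$ and a neighbourhood of $\gamma$ wider than the maximal jump. Every sample path crossing it must hit $A=\{g : go \text{ lies in that neighbourhood}\}$, and $A$ is a finite union of $\langle h\rangle$-orbits $h^nb_j$. This gives, for $\xi<0<\eta$, the factorization
\[
\Theta(\xi,\eta)=\sum_{a,b\in A}K_{\check\mu}(a,\xi)(M_A)_{ab}K_\mu(b,\eta),
\]
with $M_A=G_A^{-1}$ bounded on $\ell^2(A)$.

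In the coordinates $s=\log(-\xi)$, $t=\log\eta$, the Liouville density becomes $c\,\mathcal{K}(s-t)$ with $\mathcal{K}(x)=1/(4\cosh^2(x/2))$. The log-coordinate densities $k^+_a$ of $a_*\nu|_{(0,\infty)}$, for $a\in A$, are the $\tau$-translates of finitely many functions $k^+_{b_1},\dots,k^+_{b_N}$. By Bownik's range-function theorem they cannot span $L^2(\mathbb{R})$, so some $\psi\neq0$ is orthogonal to all of them. The factorization then forces $\mathcal{K}\star\psi=0$, which contradicts the fact that $\widehat{\mathcal{K}}$ never vanishes.
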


We refer to \cite{DKN2009} and \cite[Conjecture]{kaimanovich2011matrix} for the statements of this conjecture in the literature. Combining Theorem \ref{main theorem} with the main result of \cite{GuivleJan90}, \cite{GuivLeJan1993}, the singularity conjecture is fully resolved for all Fuchsian groups of first kind, whose limit set is $\Lambda \Gamma = \partial \mathbb{H}$. Note that our proof does not require $\mu$ to be symmetric. 


\subsection*{History} The original motivation for the conjecture comes from the work of Furstenberg (\cite{furstenberg1963noncommuting}, \cite{furstenberg71}), who constructed random walks on lattices in semisimple Lie groups whose hitting measures are absolutely continuous at infinity. 
Those measures, however, are not finitely supported. The conjecture has been in the folklore for quite some time, being considered by several mathematicians, such as Y. Guivarc'h, V. Kaimanovich, F. Ledrappier. Its precise formulation is contained in Kaimanovich--LePrince 
\cite[Conjecture]{kaimanovich2011matrix} and Deroin--Kleptsyn--Navas~\cite{DKN2009}. 

Note moreover that Bourgain \cite{Bourgain2012} and B\'ar\'any--Pollicott--Simon \cite{MR2969625} construct finitely supported random walks on $SL_2(\mathbb{R})$ whose hitting measure is absolutely continuous, but in those cases the group generated by the support of the measure is not discrete. For more recent developments concerning stationary measures for dense subgroups of $SL_2(\mathbb{R})$ see Kittle \cite{https://doi.org/10.1112/plms.70072}.

For finite-covolume, non cocompact Fuchsian groups, the conjecture was already known due to Guivarc'h--LeJan, see \cite{GuivleJan90} and \cite{GuivLeJan1993}. Their approach uses the winding of the geodesic flow, and has been further developed by Gadre--Maher--Tiozzo \cite{GadreMaherTiozzo2015}, B\'enard \cite{benard2023winding}, and, in higher dimension, Randecker--Tiozzo \cite{2019arXiv190411581R}. Other complementary approaches are using the Green metric, starting with Blach\`ere--Ha\"issinsky--Mathieu \cite{blachere2011harmonic}, and then Gekhtman--Tiozzo \cite{MR4069238} and Kim--Zimmer \cite{KimZimmer2025}.  

Thus, the remaining open case was the case of cocompact $\Gamma$. In this case, singularity was proven for certain measures on cocompact Fuchsian groups with regular fundamental domains by Kosenko \cite{10.1093/imrn/rnaa213} and Carrasco--Lessa--Paquette \cite{Carrasco2019OnTS} and, in greater generality, for Fuchsian groups with centrally symmetric fundamental domains by Kosenko--Tiozzo \cite{kosenko_tiozzo_2022}. Another approach is via thermodynamic formalism, as in Garc\'ia--Lessa \cite{GarciaLessa2025} and Cantrell--Tanaka (\cite{CantrellTanaka2024}, \cite{CantrellTanakaManhattan2025}). 
Further progress was recently obtained by Kosenko~\cite{Kosenko2026} using a functional analytic approach and in dimension $3$ using Dehn filling by Bogachev--Kosenko--Tiozzo \cite{BogachevKosenkoTiozzo2025}. 

We direct the reader to the first author's survey \cite{survey} for the detailed history around the conjecture. 
 
\subsection*{Idea of the proof}
The proof can be formulated in the language of \emph{geodesic currents}, i.e. $\Gamma$-invariant measures 
on $\partial \mathbb{H} \times \partial \mathbb{H} \setminus \Delta$. The natural current associated to the Lebesgue measure is the so-called \emph{Liouville current}, while the natural current associated to the random walk is known as the \emph{Na\"im current}. It is based on 3 steps:

\begin{enumerate}
\item If $\nu$ is absolutely continuous  (hence so is the hitting measure for the reverse random walk $\check{\nu}$), then, by ergodicity of the geodesic flow on hyperbolic surfaces, the Na\"im current and the Liouville current are a multiple of each other.
\item If we set $A$ as a neighborhood (in the group) of the axis of a loxodromic element in $\Gamma$, then every bi-infinite sample path crossing from one side of $A$ to the other has to hit $A$. As a consequence, the Na\"im current factors as 
$$dJ(\xi, \eta) = \sum_{a, b \in A} K_{\check{\mu}}(a, \xi) (M_A)_{a, b} K_\mu(b, \eta) d\check{\nu}(\xi) d\nu(\eta),$$
where $K_{\check{\mu}} (\cdot, \cdot)$, $K_{\mu}( \cdot, \cdot)$ are the Martin kernels and $M_A$ the inverse of the Green operator restricted to $\ell^2(A)$. 
\item 
Under the assumption of absolute continuity, after performing a logarithmic change of coordinates, both currents define bounded integral operators on $L^2(\mathbb{R})$, and one can basically define the kernel of the current $J$ as 
$$\textup{ker}(J) := \left\{ \varphi \in L^2(\mathbb{R}) \ : \ \int \varphi(t) dJ(s, t)  = 0 \textup{ for a.e. } s \in \mathbb{R} \right\}.$$
We then prove that, in these coordinates, the kernel of the Liouville current is trivial, while, because of the factorization in (2), we show by Fourier analysis that the kernel of the Na\"im current is not trivial. This contradicts (1). 
\end{enumerate}



	

	
\subsection*{AI disclosure}
The proof was obtained with the help of ChatGPT. The paper was completely written by the authors, and we take full responsibility for the contents of this preprint.

\medskip
After the first draft of this manuscript was completed, we have discovered that an alternative proof has been independently obtained by 
Timoth\'ee B\'enard, see \cite{benard2026proofsingularityconjecturediscrete}. The proofs appear to be substantially different: while he uses winding statistics for the geodesic flow, described in cohomological terms, we use currents and Fourier analysis on the boundary circle. 
The proof in \cite{benard2026proofsingularityconjecturediscrete} is slightly more general, as it only requires that the group (rather than the semigroup) generated by the support of $\mu$ equals $\Gamma$. 

\section{Green functions and operators} 
Given a probability measure $\mu$ on $\Gamma$, we define the random walk driven by $\mu$ as 
$$w_n := g_1 \dots g_n$$
where the $(g_n)_{n \geq 1}$ are independent and identically distributed, with law $\mu$.
Let us assume that the support of $\mu$ generates $\Gamma$ as a semigroup. Let $P^n(x, y) := \mu^{*n}(x^{-1} y)$ be the probability that the walk starting from $x$ is at $y$ at the $n$th step.
Recall that the \emph{Green function} is defined as 
$$G(x, y) := \sum_{n \geq 0} P^n(x, y) \qquad \textup{for any }x, y \in \Gamma$$
and the Markov operator $P : \ell^2(\Gamma) \to \ell^2(\Gamma)$ is defined as 
$$Pf(x) := \sum_{g \in \Gamma} \mu(g) f(xg).$$
Since $\Gamma$ is non-amenable, by \cite{Day1964} the Markov operator has spectral radius $< 1$ on $\ell^2(\Gamma)$, hence we can define 
the \emph{Green operator} $G : \ell^2(\Gamma) \to \ell^2(\Gamma)$ as
$$G := (I - P)^{-1} = \sum_{n \geq 0} P^n.$$
Note that the relationship between the operator and the function is $G(x, y) = (G \delta_y)(x)$ for any $x, y \in \Gamma$.

We will also use the \emph{reflected measure} $\check{\mu}(g) := \mu(g^{-1})$, and add subscripts $\mu$ or $\check{\mu}$ to our objects when it is not clear to which measure they refer to. Note that by definition 
$$P_{\check{\mu}}(x, y) = P_\mu(y, x), \qquad G_{\check{\mu}}(x,y) = G_\mu(y, x) \qquad \textup{for any }x, y \in \Gamma,$$
hence also $P_{\check{\mu}} = P_\mu^*$ and $G_{\check{\mu}} = G_\mu^*.$
 
For any subset $A \subseteq \Gamma$, consider the restricted operator $G_A : \ell^2(A) \to \ell^2(A)$
defined as $G_A(f) := (G f)\vert_{A}$.

\begin{lemma} \label{bounded-inverse}
The operator $G_A$ has a bounded inverse $M_A := G_A^{-1} : \ell^2(A) \to \ell^2(A)$.
\end{lemma}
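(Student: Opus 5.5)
The plan is to prove that $G_A$ is \emph{coercive}: there is $\delta>0$ such that
$$\textup{Re}\,\langle G f, f\rangle \geq \delta \|f\|^2 \qquad \textup{for all } f \in \ell^2(\Gamma).$$
This suffices. For $f \in \ell^2(A)$ (extended by zero) we have $\langle G_A f, f\rangle_{\ell^2(A)} = \langle Gf, f\rangle_{\ell^2(\Gamma)}$, since pairing against $f$ only sees the restriction of $Gf$ to $A$. So the same bound holds for $G_A$ on $\ell^2(A)$. A bounded operator $T$ with $\textup{Re}\langle Tf,f\rangle \ge \delta\|f\|^2$ is injective with closed range and $\|Tf\| \ge \delta \|f\|$. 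The same holds for $T^*$, so $T$ has dense range. Hence $T$ is invertible with $\|T^{-1}\| \le \delta^{-1}$; this is the Lax--Milgram argument. This gives $M_A$, and works for arbitrary $A$.

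To prove coercivity, substitute $h := Gf$, so $f = (I-P)h$. Then
$$\langle Gf, f\rangle = \langle h, (I-P)h\rangle.$$
Write $R_g h(x) := h(xg)$, which is unitary on $\ell^2(\Gamma)$, so that $P = \sum_g \mu(g) R_g$. For each $g$ we have $\|h\|^2 - \textup{Re}\langle R_g h, h\rangle = \tfrac12 \|R_g h - h\|^2$, and therefore
$$\textup{Re}\,\langle (I-P)h, h\rangle = \tfrac12 \sum_{g} \mu(g)\, \|R_g h - h\|^2 .$$
This identity is where the non-symmetric case is handled: only the real part matters, and no self-adjointness of $P$ is needed.

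The key step is a uniform lower bound on this Dirichlet form. The finite set $S = \textup{supp}\,\mu$ generates $\Gamma$, and $\Gamma$ is non-amenable. By the Følner/Kesten characterization, the right regular representation has no almost invariant vectors. So there exists $c>0$ with $\max_{g\in S}\|R_g h - h\|^2 \ge c\|h\|^2$ for every $h \in \ell^2(\Gamma)$. Setting $m := \min_{g \in S}\mu(g) > 0$, which is positive because the support is finite, we get
$$\textup{Re}\,\langle (I-P)h,h\rangle \ge \tfrac{mc}{2}\|h\|^2.$$
Finally, $\|f\| = \|(I-P)h\| \le 2\|h\|$. Combining,
$$\textup{Re}\langle Gf,f\rangle \ge \tfrac{mc}{8}\|f\|^2,$$
so we may take $\delta = mc/8$.

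I expect the only delicate point to be this step. The spectral radius bound from \cite{Day1964} controls $\sum P^n$, but it does not by itself give a bound on the numerical range of $I-P$ when $\mu$ is not symmetric. That is why I plan to go through the "no almost invariant vectors" formulation of non-amenability rather than through $\|P\|<1$.
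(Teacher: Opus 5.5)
Your proof is correct. Its skeleton is the same as the paper's: coercivity of $G$ on all of $\ell^2(\Gamma)$, restriction to $\ell^2(A)$, Lax--Milgram, and the substitution $h=Gf$. The difference is in how you obtain coercivity of $G$.

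You first prove that $I-P$ itself is coercive. This uses non-amenability in the Hulanicki--Reiter form, the fact that $S$ generates $\Gamma$, and $m=\min_S\mu>0$. You then transfer the bound to $G$ via $\|I-P\|\le 2$, which gives the non-explicit constant $\delta=mc/8$.

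The paper uses only $\|P\|\le 1$. For $v=(I-P)w$ one has $2\,\textup{Re}\langle w,(I-P)w\rangle=\|(I-P)w\|^2+\|w\|^2-\|Pw\|^2\ge\|v\|^2$. Hence $\textup{Re}\langle Gv,v\rangle\ge\frac12\|v\|^2$, with a universal constant. Your own Dirichlet-form identity gives this immediately as well: by convexity of $\|\cdot\|^2$,
$$\tfrac12\sum_g\mu(g)\|R_gh-h\|^2\ge\tfrac12\bigl\|\sum_g\mu(g)(R_gh-h)\bigr\|^2=\tfrac12\|f\|^2.$$

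So the ``delicate point'' you anticipated does not actually arise. Coercivity of $G=(I-P)^{-1}$ needs no lower bound on the numerical range of $I-P$, only contractivity of $P$. Non-amenability is needed only to know that $G$ exists.

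What your route buys in return is the stronger fact that $I-P$ is itself coercive. That shows directly that $I-P$ is boundedly invertible on $\ell^2(\Gamma)$ without citing Day. It does not, by itself, give convergence of the Neumann series $\sum_n P^n$.
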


\begin{proof}
We need to prove that $G_A$ is coercive, which implies it is invertible with a bounded inverse. 

Let $w = Gv$, so that $v = w - Pw$. Then 
$$2 \textup{Re}\langle Gv, v \rangle = 2 \textup{Re} \langle w, w - Pw \rangle = \Vert w - P w \Vert^2 + \Vert w \Vert^2 - \Vert P w \Vert^2 \geq \Vert v \Vert^2$$
so 
$$ \textup{Re}\langle Gv, v \rangle \geq \frac{1}{2} \Vert v \Vert^2.$$
Since $\ell^2(A)$ embeds isometrically into $\ell^2(\Gamma)$, the previous estimate, taken with $v \in \ell^2(A)$, 
immediately also implies
$$\textup{Re}  \langle G_A v, v \rangle  \geq \frac{1}{2} \Vert v \Vert^2 \qquad \forall v \in \ell^2(A).$$
Hence $G_A$ is coercive and so (by Lax-Milgram) it is boundedly invertible. 
\end{proof}

\begin{lemma}
Let $A \subseteq \Gamma$ be a subset, and let $x, y \in \Gamma \setminus A$ such that every random walk path connecting two points $x, y$ hits $A$. Then we have the equality 
\begin{equation} \label{Green-equal}
G(x, y) = \sum_{a, b \in A} G(x, a) (M_A)_{a, b} G(b, y).
\end{equation}
\end{lemma}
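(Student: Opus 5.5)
Since every path from $x$ to $y$ hits $A$, we decompose paths at their first visit to $A$ and use the last-exit decomposition into $y$, then identify the middle matrix with $M_A$. Concretely, define the \emph{first hitting distribution} $H(x,a)$ for $a\in A$ as the probability that the walk started at $x$ first enters $A$ at $a$. Because every path from $x$ to $y$ must pass through $A$, the strong Markov property at the first hitting time of $A$ gives
\[
G(x,y)=\sum_{a\in A} H(x,a)\,G(a,y).
\]
The same identity holds with $y$ replaced by any $b\in A$, trivially: a path from $x\notin A$ to $b\in A$ hits $A$. So
\[
G(x,b)=\sum_{a\in A} H(x,a)\,G(a,b)\qquad\text{for all } b\in A.
\]
In operator form, the row vector $G(x,\cdot)|_A$ equals $H(x,\cdot)\,G_A$. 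Here $G_A$ is the matrix $(G(a,b))_{a,b\in A}$, which is indeed the matrix of the restricted operator, since $(G\delta_b)(a)=G(a,b)$.

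Next we apply Lemma~\ref{bounded-inverse}: $G_A$ is boundedly invertible on $\ell^2(A)$. Provided we know that $H(x,\cdot)$ and $G(x,\cdot)|_A$ lie in $\ell^2(A)$, we can solve
\[
H(x,\cdot)=G(x,\cdot)|_A\,M_A, \qquad\text{i.e.}\qquad H(x,a)=\sum_{b\in A}G(x,b)(M_A)_{b,a}.
\]
Substituting this into the first identity, and relabelling indices, gives \eqref{Green-equal}.

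The main obstacle is the functional-analytic bookkeeping. We must check that the row vectors are in $\ell^2(A)$, so that the matrix products make sense and can be rearranged. $H(x,\cdot)$ is a sub-probability vector, hence in $\ell^1\subset\ell^2$. For $G(x,\cdot)$, we write $G(x,\cdot)=G_{\check\mu}\delta_x$, i.e. the image of $\delta_x$ under $G^*$, which is bounded on $\ell^2(\Gamma)$; hence $G(x,\cdot)\in\ell^2(\Gamma)$. Similarly $G(\cdot,y)=G\delta_y\in\ell^2$.

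The row identity then reads $G(x,\cdot)|_A = G_A^{*}H(x,\cdot)$ as vectors in $\ell^2(A)$, where $G_A^{*}$ is the adjoint with real entries, i.e. the transpose. We justify this coordinatewise: each coordinate is an absolutely convergent sum of nonnegative terms, so it is a genuine identity in $\ell^2(A)$. Since $G_A$ is invertible, so is $G_A^*$, with inverse $M_A^*$, which yields $H(x,\cdot)=M_A^*\,G(x,\cdot)|_A$.

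Finally we pair with $G(\cdot,y)|_A\in\ell^2(A)$:
\[
\sum_a H(x,a)G(a,y)=\langle M_A^*G(x,\cdot)|_A,\;G(\cdot,y)|_A\rangle=\langle G(x,\cdot)|_A,\;M_A\,G(\cdot,y)|_A\rangle .
\]
Expanding the right-hand side gives exactly the double sum in \eqref{Green-equal}, interpreted as an iterated sum, which converges because all vectors lie in $\ell^2$.
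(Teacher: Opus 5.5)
Your proposal is correct and follows the paper's argument: decompose at the first hitting time of $A$ to get $G(x,b)=\sum_{a\in A}H(x,a)G(a,b)$ for $b\in A$ and $G(x,y)=\sum_{a\in A}H(x,a)G(a,y)$, invert $G_A$ via Lemma~\ref{bounded-inverse} to express the hitting distribution through $M_A$, and substitute. The difference is only in bookkeeping: the paper packages the hitting probabilities into an operator $F:\ell^2(A)\to\ell^2(\Gamma)$ and writes $F=G\vert_{\ell^2(A)}G_A^{-1}$, while you fix $x$ and use the row vector $H(x,\cdot)$ together with the adjoint $G_A^*$, which makes explicit the $\ell^2$-membership and convergence checks that the paper leaves implicit.
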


\begin{proof}
Given $a \in A$, let $F(x, a) := \mathbb{P}_x(T_A < +\infty, w_{T_A} = a)$ denote the probability that the walk hits $A$ and the first location where it hits it is $a$. 
Then by the Markov property we have, for any $x \in \Gamma$ and any $b \in A$, 
\begin{equation} \label{first-stop-2}
G(x, b) = \sum_{a \in A} F(x, a)G(a, b).
\end{equation}
Let us define the operator $F : \ell^2(A) \to \ell^2(\Gamma)$ as $(Ff)(x) := \sum_{a \in A} f(a) F(x, a)$.
So we can rewrite \eqref{first-stop-2} as 
$$G \vert_{\ell^2(A)} = F G_A$$
and, since $G_A$ is invertible, 
\begin{equation} \label{first-stop-inverse}
F = G \vert_{\ell^2(A)} G_A^{-1}.
\end{equation}
Now, if every path connecting $x$ and $y$ hits $A$, we have, also from the Markov property, 
\begin{equation} \label{first-stop-1}
G(x, y) = \sum_{b \in A} F(x, b)G(b, y)
\end{equation}
hence, by applying \eqref{first-stop-inverse}, we obtain the desired equality
$$G(x, y) = \sum_{a, b \in A} G(x, a) (M_A)_{a, b} G(b, y).$$
\end{proof}

\section{Boundary currents}

We recall that for a random walk on $\Gamma$ driven by $\mu$, for every Borel $A \subseteq \partial \mathbb{H}$, the harmonic measure $\nu$ on the boundary $\partial \mathbb{H}$ is given by the probability that the random walk converges to a point in $A$. The convergence to the boundary is a classical fact, we refer to \cite{kaimanovich2000poisson}. Moreover, $\nu$ is a unique probability measure given by the $\mu$-stationarity condition 
\[
\nu = \sum_{g \in \Gamma} \mu(g) g_* \nu.
\]

Recall that a \emph{(geodesic) current} is a $\Gamma$-invariant Radon measure on $\partial \mathbb{H} \times \partial \mathbb{H} \setminus \Delta$, that is, on the space of oriented geodesics in $\mathbb{H}$.
One important example is the \emph{Liouville current}
$$dL(\xi, \eta) = \frac{d \xi \ d \eta}{(\xi - \eta)^2}.$$

Associated to a random walk driven by the probability measure $\mu$, we can also produce a current as follows. See \cite{CantrellTanaka2024} for details.
First, the \emph{Martin kernel} is defined as 
$$K_\mu(x, \xi) := \lim_{y \to \xi} \frac{G(x, y)}{G(e, y)} = \frac{d(x_* \nu)}{d \nu}(\xi) \quad \forall x \in \Gamma, \xi \in \partial \mathbb{H}.$$
Note that the limit exists as in this case the Martin boundary coincides with the Gromov boundary \cite{gouezel2015martin}. 
Moreover, the \emph{Na\"im kernel} is defined as 
$$\Theta(\xi, \eta) := \lim_{\stackrel{x \to \xi}{y\to \eta}} \frac{G(x, y)}{G(x, e)G(e, y)} \quad \forall \xi, \eta \in \partial \mathbb{H}.$$
Associated to it, we have the \emph{Na\"im current}
$$dJ(\xi, \eta) = \Theta(\xi, \eta) \ d{\check{\nu}}(\xi) \ d\nu(\eta).$$

\begin{lemma} \label{liouville}
If $\nu$ and $\check{\nu}$ are absolutely continuous, then there exists $c > 0$ such that the Na\"im current equals 
$$dJ(\xi, \eta) = c \frac{d \xi \ d \eta}{(\xi - \eta)^2}.$$
\end{lemma}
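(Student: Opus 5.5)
Both $J$ and the Liouville current $L$ are $\Gamma$-invariant Radon measures on $\partial \mathbb{H} \times \partial \mathbb{H} \setminus \Delta$. The plan is to show that $J$ is absolutely continuous with respect to $L$ with a $\Gamma$-invariant density, and then to use ergodicity of $\Gamma$ acting on pairs of boundary points, which is ergodicity of the geodesic flow on the compact surface $\Gamma\backslash\mathbb{H}$, to conclude that the density is constant.

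\emph{Step 1: $\Gamma$-invariance of $J$.} I take this as known (see \cite{CantrellTanaka2024}). The cocycle identity for $\Theta$ comes from the Martin kernel formula $K_\mu(x,\xi) = \frac{d(x_*\nu)}{d\nu}(\xi)$ and its analogue for $\check\mu$. Indeed, $G(gx,gy)=G(x,y)$ gives
$$\Theta(g\xi,g\eta)\,K_{\check\mu}(g^{-1},\xi)\,K_\mu(g^{-1},\eta)=\Theta(\xi,\eta),$$
which exactly compensates the Radon--Nikodym derivatives of $g_*\check\nu$ and $g_*\nu$.

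\emph{Step 2: absolute continuity.} Assume $d\nu = f\,d\eta$ and $d\check\nu = \check f\,d\xi$. Then
$$dJ = h(\xi,\eta)\, dL, \qquad h(\xi,\eta) := \Theta(\xi,\eta)\check f(\xi) f(\eta)(\xi-\eta)^2,$$
with $h \ge 0$ measurable and locally integrable against $L$. Since both $J$ and $L$ are $\Gamma$-invariant, uniqueness of Radon--Nikodym derivatives gives $h(g\xi,g\eta)=h(\xi,\eta)$ for $L$-a.e.\ $(\xi,\eta)$ and every $g\in\Gamma$. Here $\Gamma$ is countable, so only countably many null sets are discarded.

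\emph{Step 3: ergodicity.} This is the main point. The action of $\Gamma$ on $\partial\mathbb{H}\times\partial\mathbb{H}\setminus\Delta$ with the Liouville measure class is ergodic. The reason is that $\Gamma$-invariant functions of $(\xi,\eta)$ correspond to flow-invariant functions on the unit tangent bundle $T^1(\Gamma\backslash\mathbb{H}) \cong \Gamma\backslash(\partial^2\mathbb{H}\times\mathbb{R})$, and the geodesic flow on a compact hyperbolic surface is ergodic for Liouville measure (Hopf). Hence $h$ equals a constant $c$ a.e., and $c \ge 0$.

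\emph{Step 4: positivity.} To see $c>0$, note that $J \neq 0$. The harmonic measures $\nu$ and $\check\nu$ are nonatomic, since $\Gamma$ is nonelementary, so $\check\nu\otimes\nu(\Delta)=0$. Moreover $\Theta>0$ everywhere, since $G>0$ by irreducibility (the support generates $\Gamma$ as a semigroup) and the Naïm kernel is positive. Therefore $J$ has positive total mass on $\partial\mathbb{H}\times\partial\mathbb{H}\setminus\Delta$, which forces $c>0$.

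The only delicate point is the translation of ergodicity between $\Gamma$ acting on boundary pairs and the geodesic flow. This is standard via the Hopf parametrization, and the plan is to cite it rather than reprove it.
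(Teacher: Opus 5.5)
Your proposal is correct and follows essentially the same route as the paper: the Radon--Nikodym derivative $dJ/dL$ is $\Gamma$-invariant, so it defines a geodesic-flow-invariant function on $T^1(\mathbb{H}/\Gamma)$, which is constant by ergodicity of the geodesic flow. You also check two points the paper leaves implicit, namely the $\Gamma$-invariance of $J$ via the cocycle identity for $\Theta$ and the positivity $c>0$; note that $\Theta>0$ comes from Ancona-type estimates, not from $G>0$ alone.
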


\begin{proof}
This is essentially \cite[Lemma 2.12]{CantrellTanakaManhattan2025}. Let $d L$ be the Liouville current, and recall the isomorphism $T^1 \mathbb{H} \cong (\partial \mathbb{H} \times \partial \mathbb{H} \setminus \Delta) \times \mathbb{R}$. For a vector $v \in T^1 \mathbb{H}$, let $v^+, v^- \in \partial \mathbb{H}$ be the forward and backward point of the geodesic tangent to $v$. If one considers the function $F:  T^1 \mathbb{H} \to \mathbb{R}$ defined by taking the Radon-Nikodym derivative $F(v) := \frac{d J}{d L}(v^-, v^+)$, then $F$ descends to a measurable function on $T^1(\mathbb{H}/\Gamma)$, invariant under the geodesic 
flow. By ergodicity of the geodesic flow, $F$ must be constant almost everywhere. 
\end{proof}

\section{Fourier analysis}

Given $\tau > 0$, let us define the translation operator $T : L^2(\mathbb{R}) \to L^2(\mathbb{R})$ 
$$T(f)(x) = f(x + \tau).$$ 
Let us also denote the Fourier transform as
$$\widehat{f}(\omega) :=  \int_{\mathbb{R}} f(t) e^{-2 \pi i \omega t} \ dt.$$
The fundamental fact in Fourier analysis that we are using is that translates of finitely many functions by a lattice do not 
span all of $L^2(\mathbb{R})$. 

\begin{lemma} \label{fourier}
Let $f_1, \dots, f_N$ be functions in $L^2(\mathbb{R})$, and let  
$$H:= \overline{\textup{Span}\{ T^n f_j \ : \ 1 \leq j \leq N, n \in \mathbb{Z} \}} \subseteq L^2(\mathbb{R}).$$
Then $H \neq L^2(\mathbb{R})$.
\end{lemma}

\begin{proof}
It follows from Bownik \cite{bownik2000structure}. After rescaling the real variable, let us assume that $\tau = 1$. 
By \cite[Prop. 1.5]{bownik2000structure}, there is a bijective correspondence between translation invariant closed subspaces of $L^2(\mathbb{R})$ 
and measurable range functions, i.e. measurable functions 
$$J : \mathbb{R}/\mathbb{Z} \to \{ \textup{closed subspaces of }\ell^2(\mathbb{Z}) \},$$ 
up to a.e. equivalence.

Defining the map $\mathcal{T} : L^2(\mathbb{R}) \to L^2(\mathbb{T}, \ell^2(\mathbb{Z}))$ as 
$$(\mathcal{T}f)(\omega) := \left( \widehat{f}(\omega + k)\right)_{k \in \mathbb{Z}}$$
then by \cite[Prop. 1.5]{bownik2000structure} the range function associated to $H$ is 
$$J_H(\omega) = \textup{Span}\{ \mathcal{T}f_1(\omega), \dots, \mathcal{T}f_N(\omega) \} \ \textup{for a.e. }\omega \in \mathbb{T}.$$
On the other hand, the full space $V = L^2(\mathbb{R})$ corresponds to the function 
$$J_V(\omega) = \ell^2(\mathbb{Z}) \ \textup{for a.e. }\omega \in \mathbb{T}.$$
Since $J_H$ and $J_V$ are not equal almost surely, then $H \neq V$. 
\end{proof}

\subsection*{Proof of the main theorem}
Let $h$ be a loxodromic element in $\Gamma$, and fix coordinates on $\mathbb{H}$ so that 
the two fixed points of $h$ are $0$ and $\infty$, and so that its action in these coordinates is $h(z) = e^\tau z$, for some $\tau > 0$. Let $\gamma$ be the geodesic connecting 
the two fixed points of $h$, and let us take $o = i \in \mathbb{H}$ as the base point. 
Then let 
$$L:= \sup_{g \in \textup{supp}(\mu)} d(o, go)$$ and define 
$$\mathcal{A} := \{ z \in \mathbb{H} \ : \ d(z, \gamma) \leq L +1 \}$$
the strip of width $L+1$ around $\gamma$. Note that $\mathbb{H} \setminus \mathcal{A}$ is disconnected, 
and, since the random walk has step size bounded by $L$, every sample path $(w_n o)$ from one side to the other of 
$\mathcal{A}$ has to intersect $\mathcal{A}$. 
Denote the group elements whose orbit point lies in $\mathcal{A}$ as
$$A := \{ g \in \Gamma \ : \ d(go, \gamma) \leq L +1 \}.$$ 
Note that, since the action of $\Gamma$ on $\mathbb{H}$ is properly discontinuous and the quotient $\mathcal{A}/\langle h \rangle$ is compact, there exists $b_1, \dots, b_N \in \Gamma$ such that 
$$A = \{ h^n b_j \ : \ 1 \leq j \leq N, n \in \mathbb{Z} \}.$$

Since both the harmonic and Lebesgue measure are quasi-invariant and ergodic, then if $\nu$ is not singular, then 
it must be absolutely continuous. 

Moreover, recall that by \cite[Theorem 1.5]{blachere2011harmonic} and \cite[Theorem 1.9]{KimZimmer2025}, the harmonic measure $\nu$ is absolutely continuous if and only if there exists $C > 0$ such that 
\begin{equation} \label{green} 
C^{-1} e^{-d(x o, y o)} \leq G(x, y) \leq C e^{-d(x o, y o)} \quad \textup{for all }x, y \in \Gamma
\end{equation}
and in this case, if we denote by $\lambda$ the \emph{visual measure} on $\partial \mathbb{H}$, which lies in the Lebesgue class with density $\frac{d \lambda}{dx} = \frac{1}{\pi(1 + x^2)}$, the density $\frac{d \nu}{d \lambda}$ is bounded almost everywhere. 

Thus, if $\nu$ is absolutely continuous, then \eqref{green} holds, which, since $G_{\check{\mu}}(x, y) = G_{\mu}(y, x)$, 
implies that the analogous estimate works for $G_{\check{\mu}}$, which in turn implies that $\check{\nu}$ is also absolutely continuous. 
Moreover, the densities $\frac{d \nu}{d \lambda}$, $\frac{d \check{\nu}}{d \lambda}$ are bounded almost surely.

Let us recall the structure of horofunctions for the hyperbolic plane. 
Given $\xi \in \partial \mathbb{H}$, let $r_\xi(t)$ denote the geodesic ray based at $o$ and converging to $\xi$. 
Recall now that the Busemann cocycle is 
$$B_\xi(o, z) := \lim_{t \to \infty} (t - d(r_\xi(t), z)) \qquad \textup{for any }\xi \in \partial \mathbb{H}, z \in \mathbb{H}$$ 
and that the function $b_\xi(t, z) := t - d(r_\xi(t), z)$ is increasing in $t$ for $t \geq 0$ and any fixed $z \in \mathbb{H}$.
In the upper half plane model, we can explicitly compute
$$B_\xi(i, z) = \log \left( \textup{Im}(z) \frac{| i-\xi |^2 }{|z-\xi|^2} \right).$$

\begin{lemma}
	\label{horofun}
	Let $h$ be as before, $\xi \in \partial \mathbb{H} \setminus \{0, \infty\}$, and $z \in \mathbb{H}$. Then we have
		$$B_\xi(i, h^n z) \le C_{\xi, z} - |n| \tau \quad \textup{ for any }n \in \mathbb{Z},$$ 
	where $\tau$ is the translation length of $h$ and $C_{\xi, z} > 0$ depends only on the choice of $\xi$ and $z \in \mathbb{H}$.
\end{lemma}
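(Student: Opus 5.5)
Since $h^n z = e^{n\tau} z$, the plan is to substitute this into the explicit upper half-plane formula
$$B_\xi(i, w) = \log\left( \textup{Im}(w) \frac{|i-\xi|^2}{|w-\xi|^2}\right)$$
and estimate the denominator $|e^{n\tau}z - \xi|^2$ separately for $n \geq 0$ and $n < 0$. Writing $w_n := h^n z = e^{n\tau} z$, we have $\textup{Im}(w_n) = e^{n\tau}\textup{Im}(z)$. Thus
$$B_\xi(i, h^n z) = n\tau + \log \textup{Im}(z) + \log|i-\xi|^2 - \log|e^{n\tau} z - \xi|^2 ,$$
and everything reduces to bounding $|e^{n\tau}z - \xi|$ from below. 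The key hypothesis is that $\xi \in \mathbb{R}\setminus\{0\}$ is a finite nonzero real number.

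\textbf{Case $n \geq 0$.} Since $\xi$ is real, $|e^{n\tau}z - \xi| \geq \textup{Im}(e^{n\tau}z) = e^{n\tau}\textup{Im}(z)$. This lower bound is not sufficient by itself: it only yields $-2n\tau$ up to constants, which combined with $+n\tau$ gives $-n\tau$. That is exactly what we want, so the case $n\ge 0$ is done:
$$B_\xi(i,h^nz) \le n\tau + \log \textup{Im}(z) + \log|i-\xi|^2 - 2n\tau - 2\log\textup{Im}(z) = -n\tau + \log\frac{|i-\xi|^2}{\textup{Im}(z)}.$$

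\textbf{Case $n < 0$.} Here $e^{n\tau}z \to 0$ while $\xi \neq 0$, so the denominator stays bounded away from zero. Concretely, $|e^{n\tau} z - \xi| \geq \textup{dist}(\xi, \{tz : t\in(0,1]\})$. The ray segment $\{tz : t \in (0,1]\}$ lies in the open upper half plane together with its endpoint $0 \ne \xi$, so its closure $\{tz: t\in[0,1]\}$ is a compact set avoiding the real point $\xi$. Hence this distance is a positive constant $\delta_{\xi,z}$. Then
$$B_\xi(i,h^nz)\le n\tau + \log\frac{\textup{Im}(z)|i-\xi|^2}{\delta_{\xi,z}^2} = -|n|\tau + \log\frac{\textup{Im}(z)|i-\xi|^2}{\delta_{\xi,z}^2}.$$

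Taking $C_{\xi,z}$ to be the maximum of the two constants (and of $1$, to ensure positivity) finishes the proof. The only slightly delicate point is the $n<0$ case, where we use $\xi\neq 0$ for the positive lower bound $\delta_{\xi,z}$. The assumption $\xi \neq \infty$ is used so that the explicit formula applies with finite $\xi$, and the case $n \geq 0$ relies on $\xi$ being real.
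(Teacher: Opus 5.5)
Your proof is correct and matches the paper's argument: substitute $h^n z = e^{n\tau}z$ into the explicit Busemann formula, bound $|e^{n\tau}z-\xi|$ below by $e^{n\tau}\textup{Im}(z)$ for $n\ge 0$ (the paper does this after factoring $|e^{n\tau}z-\xi|^2 = e^{n\tau}|z-\xi e^{-n\tau}|\,|ze^{n\tau}-\xi|$), and use $e^{n\tau}z\to 0\neq\xi$ for $n<0$. Your compactness argument producing $\delta_{\xi,z}>0$ simply makes explicit the paper's ``bounded'' claim in the second case.
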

\begin{proof}
	This is a simple but instructive computation. Let $z \in \mathbb{H}$, and let us consider the case $n \to + \infty$ first. 
	In this case we estimate
	\[
	\begin{aligned}
		B_\xi(i, e^{n \tau} z) 
		&= \log \left( e^{n \tau} \textup{Im}(z) \frac{| i- \xi |^2 }{| z e^{n \tau} - \xi |^2} \right) = \log \left( \frac{\textup{Im}(z) | i- \xi |^2  }{|z  - \xi e^{- n \tau}| \cdot |z e^{n \tau} - \xi|} \right) = \\
		&
		= \log \left( \underbrace{\frac{\textup{Im}(z) | i- \xi |^2}{|z -  \xi e^{-n \tau}|} }_{\text{bounded}}  \right) + \log \left( \frac{1}{ |z e^{n \tau} - \xi| } \right)   \le C - n \tau.
	\end{aligned}
	\]
A similar computation works for $n \rightarrow - \infty$, as follows.
	\[
		B_\xi(i, e^{n \tau} z) = \log \left( e^{n \tau} \underbrace{\textup{Im}(z) \frac{| i- \xi |^2 }{| z e^{n \tau} - \xi |^2}}_{\text{bounded}} \right)  \le C -|n| \tau.
	\]
\end{proof}

\begin{lemma} \label{naim}
Assume that \eqref{green} holds. With $A$ as above, we have for any $\xi < 0 < \eta$ the equality 
$$\Theta(\xi, \eta) = \sum_{a, b \in A} K_{\check{\mu}}(a, \xi) (M_A)_{a, b} K_\mu(b, \eta).$$
\end{lemma}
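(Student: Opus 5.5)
We apply the factorization \eqref{Green-equal} to pairs $x, y \in \Gamma$ with $xo$ close to $\xi$ and $yo$ close to $\eta$. Then we divide by $G(x,e)G(e,y)$ and pass to the limit term by term, using \eqref{green} and Lemma \ref{horofun} to justify exchanging limit and sum.

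First, the geometric input. Fix $\xi < 0 < \eta$. Take sequences $x_k \to \xi$ and $y_k \to \eta$; in $\partial\mathbb{H}$ these points lie on opposite sides of $\gamma$. For $k$ large, $x_k o$ and $y_k o$ lie in different components of $\mathbb{H}\setminus\mathcal{A}$, so $x_k, y_k \notin A$. Every random walk path from $x_k$ to $y_k$ has steps of length at most $L$, so it must hit $A$. By \eqref{Green-equal},
$$\frac{G(x_k,y_k)}{G(x_k,e)G(e,y_k)} = \sum_{a,b\in A} \frac{G(x_k,a)}{G(x_k,e)}\,(M_A)_{a,b}\,\frac{G(b,y_k)}{G(e,y_k)}.$$
The left side tends to $\Theta(\xi,\eta)$. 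For each fixed $b$, the factor $G(b,y_k)/G(e,y_k)$ tends to $K_\mu(b,\eta)$. Also $G(x_k,a)/G(x_k,e) = G_{\check\mu}(a,x_k)/G_{\check\mu}(e,x_k)$, which tends to $K_{\check\mu}(a,\xi)$. So each individual term converges to the desired summand.

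The main obstacle is interchanging limit and double sum, since $A$ is infinite. We would use dominated convergence, with domination uniform in $k$. By \eqref{green},
$$\frac{G(b,y_k)}{G(e,y_k)} \le C^2 e^{-(d(bo,y_ko)-d(o,y_ko))}.$$
By hyperbolic geometry (the increasing property of $b_\eta(t,\cdot)$ together with thin triangles), this is bounded by $C' e^{B_{\eta_k'}(o,bo)}$ up to a uniform additive constant, where $\eta'_k$ is the endpoint of the ray through $y_k o$. Writing $b = h^n b_j$ and applying Lemma \ref{horofun} gives a bound $u_b := C'' e^{-|n|\tau}$. The constants in Lemma \ref{horofun} depend continuously on the boundary point, so they are uniform for boundary points near $\eta$, which lies away from $0$ and $\infty$. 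The same argument gives the bound $v_a := C'' e^{-|m|\tau}$ for $a = h^m b_j$ on the $x_k$ side. Both $(u_b)$ and $(v_a)$ are summable over $A$, since there are only $N$ values of $j$. In particular they lie in $\ell^2(A)$, and since $M_A$ is bounded on $\ell^2(A)$ by Lemma \ref{bounded-inverse}, the bilinear form $\sum_{a,b}|v_a|\,|(M_A)_{a,b}|\,|u_b|$ should be finite.

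Boundedness of $M_A$ alone only controls $\sum_{a,b} v_a (M_A)_{a,b} u_b$, not the sum of absolute values $\sum |(M_A)_{a,b}|$. To avoid this, we do not use absolute dominated convergence. Instead, write the sum as $\langle v^{(k)}, M_A u^{(k)}\rangle_{\ell^2(A)}$, where $v^{(k)}_a := G(x_k,a)/G(x_k,e)$ and $u^{(k)}_b := G(b,y_k)/G(e,y_k)$. These converge pointwise, and they are dominated by the fixed $\ell^2$ functions $v$ and $u$. By dominated convergence in $\ell^2(A)$, $v^{(k)} \to K_{\check\mu}(\cdot,\xi)$ and $u^{(k)} \to K_\mu(\cdot,\eta)$ in norm. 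Continuity of the bounded bilinear form $(v,u)\mapsto\langle v, M_A u\rangle$ then gives the limit
$$\Theta(\xi,\eta) = \sum_{a}K_{\check\mu}(a,\xi)\,(M_A K_\mu(\cdot,\eta))(a),$$
which is the claimed identity, interpreted as an iterated (or $\ell^2$-pairing) sum.

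The remaining point is the uniform domination, namely comparing $d(bo,y_ko)-d(o,y_ko)$ with the Busemann function up to a bounded error uniformly in $k$. We would obtain this from the monotonicity of $b_\xi(t,z)$ stated above.
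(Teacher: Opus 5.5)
Your proposal is correct and follows the paper's argument. You factor via \eqref{Green-equal}, bound the normalized Green ratios by $e^{B(o,ao)}$ using \eqref{green} and the monotonicity of $b_\xi(t,\cdot)$, get $\ell^2(A)$-domination from Lemma~\ref{horofun}, and pass to the limit in the bounded pairing $\langle u, M_A v\rangle$.

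The one difference is the choice of approximating sequences. The paper takes conical sequences staying within $D$ of the fixed rays $r_\xi, r_\eta$, so it applies Lemma~\ref{horofun} at the fixed boundary points at a cost of $e^{2D}$. You use arbitrary sequences and the moving endpoints $\eta'_k$. There the bound $d(o,y_ko)-d(bo,y_ko)\le B_{\eta'_k}(o,bo)$ is exact, with no thin triangles needed, and the uniformity of $C_{\eta,z}$ for $\eta$ near a point of $(0,\infty)$ follows from the explicit formula in the proof of Lemma~\ref{horofun}.
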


\begin{proof}
Since the action is cocompact, every point in the limit set is conical. 
Hence, there exists $D > 0$ and sequences $(x_k), (y_k)$ in $\Gamma$ such that $x_k o \to \xi$, $y_k o \to \eta$, and 
$d(x_k o, r_\xi) \leq D$, $d(y_k o, r_\eta) \leq D$ for all $k$.
Then, for sufficiently large $k$ the points $x_k o, y_k o$ lie on opposite sides of the $L+1$-neighborhood of the axis of $h$, hence every sample path for the random walk from $x_k$ to $y_k$ hits $A$.

Thus, by \eqref{Green-equal}, 
$$G(x_k, y_k) = \sum_{a, b \in A} G(x_k, a) (M_A)_{a, b} G(b, y_k)$$
and, dividing all terms by $G(x_k, e) G(e, y_k)$, 
\begin{equation} \label{green-1}
\frac{G(x_k, y_k)}{G(x_k, e) G(e, y_k)} = \sum_{a, b \in A} \frac{G(x_k, a)}{G(x_k, e)} (M_A)_{a, b} \frac{G(b, y_k)}{G(e, y_k)}.
\end{equation}
Now, since both $x_k o$ and $y_k o$ converge to the boundary radially, we can estimate
\begin{align*}
u_k(a) := \frac{G(x_k, a)}{G(x_k, e)} & \leq C^2 e^{-d(x_k o, a o)+d(x_k o, o)} \leq C^2 e^{-d(r_\xi(t_k), a o)+d(r_\xi(t_k), o) +2 D} \\
& \leq C^2 e^{t_k - d(r_\xi(t_k), a o) + 2 D}  \leq C^2 e^{2 D} e^{B_\xi(o, a o)}
\end{align*}
hence, since $h^n(i) = e^{n \tau} i$, for any $j \in \{1, \dots, N\}$ and any $n \in \mathbb{Z}$ we estimate using Lemma \ref{horofun}
$$u_k(h^n b_j) =  \frac{G(x_k, h^n b_j )}{G(x_k, e)} \leq C^2 e^{2 D} e^{B_\xi(i, h^n b_j i)} \leq C_{\xi, j} e^{- |n| \tau}$$
and 
$$\sup_{k} \sum_{\stackrel{1 \leq j \leq N}{n\in \mathbb{Z}}} |u_k(h^n b_j) - u(h^n b_j)|^2 \leq \sum_{\stackrel{1 \leq j \leq N}{n\in \mathbb{Z}}}  4 C^2_{\xi, j} e^{-  2|n| \tau}$$
so, since $u_k(a) \to u(a)$ pointwise, if we set $u(a) :=  K_{\check{\mu}}(a, \xi)$, then $u_k \to u$ in $\ell^2(A)$, and analogously setting
$$v_k(b) := \frac{G(b, y_k)}{G(e, y_k)}, \qquad v(b) := K_\mu(b, \eta)$$
satisfies $v_k \to v$ in $\ell^2(A)$. Hence, by taking the limit as $k \to \infty$ in \eqref{green-1}, we obtain 
$$\langle u_k, M_A v_k \rangle \to \langle u, M_A v \rangle$$
so 
$$\Theta(\xi, \eta)  = \sum_{a, b \in A} K_{\check{\mu}}(a, \xi) (M_A)_{a, b} K_\mu(b, \eta) \qquad \textup{for any } \xi < 0 < \eta.$$
\end{proof}

\begin{proof}[Proof of Theorem \ref{main theorem}]
Let us assume that $\nu$ is absolutely continuous, so that, as seen above, also $\check{\nu}$ is absolutely continuous. 
From Lemma \ref{naim} and Lemma \ref{liouville} we have that 
\begin{equation} \label{equality-currents}
\sum_{a, b \in A} K_{\check{\mu}}(a, \xi) (M_A)_{a, b} K_\mu(b, \eta) d\check{\nu}(\xi) d\nu(\eta) = c \frac{ d\xi \ d \eta}{(\xi - \eta)^2}
\end{equation}
on $\mathcal{R} := (-\infty, 0) \times (0, +\infty)$. 
Now, consider the change of variables 
$$s = \ell^-(\xi) := \log(- \xi), \quad t = \ell^+(\eta) := \log(\eta)$$ 
and define for each $g \in \Gamma$ the Radon-Nikodym derivatives
$$k^{-}_g(s) = \frac{d(\ell^-_* g_* \check{\nu}\vert_{(-\infty, 0)} )}{d s}, \qquad k^{+}_g(t) = \frac{d(\ell^+_* g_* \nu \vert_{(0, \infty)} )}{d t}$$
hence we have by rewriting \eqref{equality-currents}
\begin{equation} \label{equality-currents-2}
\sum_{a, b \in A} k^-_a(s) (M_A)_{a, b} k^+_b(t) \ ds \ dt  = c \frac{ds \ dt }{4 \cosh^2((s-t)/2)}.
\end{equation}

Moreover, note that 
$$k^{\pm}_{h^n b_j}(t) = k^{\pm}_{b_j}(t - n \tau)\qquad \forall 1 \leq j \leq N, \forall n \in \mathbb{Z}$$
and, by using the fact that the density
$\frac{d \nu}{d \lambda}$, hence also $\frac{d (b_{j*} \nu)}{d \lambda}$, is bounded above, and that $\frac{d \lambda}{dx} = \frac{1}{\pi(1 + x^2)}$, we obtain the estimate
$$0 \leq k^{\pm}_{b_j}(t) \leq C_j e^{-|t|} \qquad \forall t \in \mathbb{R},$$
so $k^{\pm}_{b_j} \in L^2(\mathbb{R})$ for any $j$.
Thus, 
$$\sup_{t \in \mathbb{R}} \sum_{a \in A} k^{\pm}_a(t) < + \infty$$
and $(k_a^{\pm}(t))_{a \in A}$ lies in $\ell^1(A) \subseteq \ell^2(A)$ for any $t \in \mathbb{R}$. 

\begin{lemma}\label{bessel}
The formula 
\begin{equation} \label{Rdef}
(R\psi)_a = \int \psi(t) k_a^+(t) \ dt, \qquad a \in A
\end{equation}
defines a bounded operator $R : L^2(\mathbb{R}) \to \ell^2(A)$, and its adjoint $R^* : \ell^2(A) \to L^2(\mathbb{R})$ is 
\begin{equation} \label{adjointdef}
R^*f := \sum_{a \in A} f(a) k_a^+.
\end{equation}
\end{lemma}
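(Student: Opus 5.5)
The plan is to show that the synthesis map $R^*$ is bounded from $\ell^2(A)$ to $L^2(\mathbb{R})$. Then $R$ is its Hilbert adjoint, and it is automatically bounded with the same norm. Using the parametrization $A = \{h^n b_j\}$ and the translation identity $k^+_{h^n b_j}(t) = k^+_{b_j}(t - n\tau)$, I split $R^*$ into $N$ pieces:
$$R^* f = \sum_{j=1}^N \sum_{n \in \mathbb{Z}} f(h^n b_j)\, k^+_{b_j}(\cdot - n\tau).$$
It therefore suffices to prove that, for a single function $k = k^+_{b_j}$ with $0 \le k(t) \le C_j e^{-|t|}$, the map $c \mapsto \sum_n c_n k(\cdot - n\tau)$ is bounded from $\ell^2(\mathbb{Z})$ to $L^2(\mathbb{R})$. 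Summing over the finitely many $j$ then costs at most a factor of $N$, or $\sqrt N$ by Cauchy--Schwarz.

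For the single-generator estimate I will use Schur's test on the Gram matrix, rather than Fourier analysis. We have
$$\Big\|\sum_n c_n k(\cdot - n\tau)\Big\|_2^2 = \sum_{n,m} c_n \overline{c_m}\, \langle k(\cdot - n\tau), k(\cdot - m\tau)\rangle,$$
and the Gram entries depend only on $n-m$. They are bounded by
$$g(n-m) := C_j^2 \int e^{-|t|} e^{-|t-(n-m)\tau|}\,dt \le C_j^2 (1 + |n-m|\tau) e^{-|n-m|\tau},$$
which is summable in $n - m$. Young's inequality, or equivalently Schur's test, then gives the bound $\|g\|_{\ell^1}\, \|c\|_{\ell^2}^2$. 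An alternative is pointwise Cauchy--Schwarz: bound $|\sum_n c_n k(t - n\tau)|^2$ by $\big(\sum_n |c_n|^2 k(t - n\tau)\big)\big(\sum_n k(t - n\tau)\big)$. The second factor is uniformly bounded by $\sup_t \sum_{a} k_a^+(t) < \infty$, which was already noted in the text. Integrating the first factor gives $\|k\|_{L^1} \|c\|^2_{\ell^2}$. This second route is cleaner and uses only that each $k^+_a$ is nonnegative and $L^1$ with uniformly bounded sums, so I would present it.

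Next, I identify the adjoint. For finitely supported $f$ and $\psi \in L^2$, the identity $\langle R^* f, \psi\rangle = \sum_a f(a) \int k^+_a \psi = \langle f, R\psi\rangle$ is immediate, with complex conjugation placed as needed. Since each $k_a^+ \in L^2$, every coordinate $(R\psi)_a$ is well defined. Bounded extension of $R^*$ from finitely supported sequences, combined with density, shows that the formula \eqref{Rdef} agrees with the Hilbert adjoint of the extended $R^*$. Hence $R\psi \in \ell^2(A)$ with $\|R\psi\| \le \|R^*\|\,\|\psi\|$. The series in \eqref{adjointdef} converges in $L^2$, and also pointwise absolutely, by the bounded-sum estimate.

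The main, though mild, obstacle is this justification of convergence and of the interchange of sum and integral for general $f \in \ell^2(A)$. Both follow from the pointwise Cauchy--Schwarz bound above together with dominated convergence.
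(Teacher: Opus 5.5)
Your argument is correct and is essentially the paper's. The paper applies the same weighted Cauchy--Schwarz (Schur test) directly to $R$, using $\int k_a^+(t)\,dt\le 1$ and $C_1=\sup_{t}\sum_{a\in A}k_a^+(t)<\infty$ to get $\Vert R\psi\Vert^2\le C_1\Vert\psi\Vert_2^2$, then reads off $R^*\delta_a=k_a^+$ and extends by linearity and continuity; you run the dual pointwise estimate on $R^*$ and recover $R$ as its adjoint, and neither the splitting into the orbits $\{h^n b_j\}$ nor the Gram-matrix bound is needed for the route you chose to present.
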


\begin{proof}
Note that $(R\psi)_a$ is well defined since $k_a^+(\cdot) \in L^2(\mathbb{R})$ for any $a \in A$. 
Now, for any $\psi \in L^2(\mathbb{R})$ we have, using Cauchy-Schwarz and the fact that  $\int k_a^+(t) \ dt \leq 1$ 
as $k_a^+(t)$ is the density of a subprobability measure, 
\begin{align*}
\sum_{a \in A} \left| \int \psi(t) k_a^+(t) \ dt \right|^2 & 
\leq \sum_{a \in A} \int |\psi(t)|^2 k_a^{+}(t) \ dt \int k_a^+(t) \ dt \\
& \leq \sum_{a \in A} \int |\psi(t)|^2 k_a^{+}(t) \ dt \\
& \leq \int |\psi(t)|^2 \left( \sum_{a \in A}  k_a^{+}(t) \right) \ dt \\
& \leq C_1 \Vert \psi \Vert_2^2
\end{align*}
where $C_1 := \sup_{t \in \mathbb{R}} \left( \sum_{a \in A}  k_a^{+}(t) \right)$.
For the second claim, note that for any $a \in A$, if we denote by $\delta_a$ the characteristic function of the singleton $\{a\}$, 
$$\langle R^* \delta_a, \psi \rangle = \langle \delta_a, R \psi \rangle = \int k_a^+(t) \psi(t)  \ dt$$
for any $\psi \in L^2(\mathbb{R})$, hence $R^*\delta_a = k_a^+$ in $L^2(\mathbb{R})$, and formula \eqref{adjointdef} follows by linearity
and continuity of the adjoint. 
\end{proof} 

By Lemma~\ref{fourier} applied to $k^{+}_{b_1}, \dots, k^{+}_{b_N}$, there exists a nonzero function $\varphi \in H^\perp \subseteq L^2(\mathbb{R})$, hence 
\begin{equation} \label{int1}
\int k_a^+(t) \varphi(t) \ dt = 0 \qquad \textup{for any }a \in A, 
\end{equation}
hence $R \varphi = 0$. 
Fix a generic $s \in \mathbb{R}$, and define for any $b \in A$ 
$$q_b(s) := \sum_{a \in A} k_a^-(s) (M_A)_{ab}.$$
Note that $q := (q_b(s))_{b \in A}$ is a well-defined element in $\ell^2(A)$, since $(k^{-}_a(s))_{a \in A} \in \ell^2(A)$ and $M_A : \ell^2(A) \to \ell^2(A)$ is bounded by Lemma \ref{bounded-inverse}. 

By the formula for the adjoint \eqref{adjointdef} and equality \eqref{equality-currents-2} we obtain 
$$(R^*q)(t)  = \sum_b q_b(s) k_b^+(t) = c\ \mathcal{K}(s-t) \qquad \textup{in }L^2(\mathbb{R}, dt)$$
where $\mathcal{K}(x) := \frac{1}{4 \cosh^2(x/2)} \in L^1(\mathbb{R}) \cap L^2(\mathbb{R})$.

Hence, by definition of adjoint operator, 
\begin{equation*} 
\int c\  \mathcal{K}(s-t) \ \varphi(t) \ dt  = \langle R^* q, \varphi \rangle = \langle q, R \varphi \rangle = 0 \quad \textup{for almost every }s \in \mathbb{R}
\end{equation*}
hence also 
$$\mathcal{K} \star \varphi = 0 \qquad \textup{in }L^2(\mathbb{R}, ds).$$

Then, by taking the Fourier transform,   
$$0 = \widehat{\mathcal{K} \star \varphi } = \widehat{\mathcal{K}} \cdot \widehat{\varphi}   \quad \textup{in }L^2(\mathbb{R}, d \omega)$$
and since the Fourier transform of $\mathcal{K}$, namely 
$$\widehat{\mathcal{K}}(\omega) = \left\{ \begin{array}{ll} \frac{2 \pi^2 \omega}{\sinh(2 \pi^2 \omega)} & \textup{for }\omega \neq 0 \\
1 & \textup{for }\omega = 0, \end{array}\right.$$
nowhere vanishes, then $\widehat{\varphi} = 0$ a.e., which implies $\varphi = 0$ a.e. 
This is a contradiction, which concludes the proof. 
\end{proof}

\bibliographystyle{plain} 
\bibliography{singularity}

\end{document}